\documentclass[a4paper, 12pt]{amsart}
\usepackage[utf8]{inputenc}
\usepackage[english]{babel}
\usepackage[T1]{fontenc}
\usepackage{amsmath, amssymb, amsfonts}
\usepackage{enumerate}
\usepackage{comment}
\usepackage[hmargin=2.25cm, vmargin=2.5cm]{geometry}
\usepackage[colorlinks=true, citecolor=blue]{hyperref}

\title{Positive definite functions and cut-off for discrete groups}
\author{Amaury Freslon}
\keywords{Cut-off phenomenon, positive definite functions, geometric group theory}
\subjclass[2010]{20F65, 60B15}
\address{A. Freslon, Laboratoire de Math\'ematiques d'Orsay, Univ. Paris-Sud, CNRS, Universit\'e Paris-Saclay, 91405 Orsay, France}
\email{amaury.freslon@math.u-psud.fr}
\date{}

\theoremstyle{plain}
\newtheorem{thm}{Theorem}[section]
\newtheorem{prop}[thm]{Proposition}

\newtheorem{lem}[thm]{Lemma}

\theoremstyle{definition}
\newtheorem{de}[thm]{Definition}
\newtheorem*{de*}{Definition}

\theoremstyle{remark}
\newtheorem{rem}[thm]{Remark}

\DeclareMathOperator{\HNN}{HNN}

\DeclareMathOperator{\var}{var}

\newcommand{\C}{\mathbb{C}}

\newcommand{\F}{\mathbb{F}}

\newcommand{\N}{\mathbb{N}}
\newcommand{\R}{\mathbb{R}}
\newcommand{\Z}{\mathbb{Z}}

\newcommand{\dd}{\mathrm{d}}

\begin{document}

\begin{abstract}
We consider the sequence of powers of a positive definite function on a discrete group. Taking inspiration from random walks on compact quantum groups, we give several examples of situations where a cut-off phenomenon occurs for this sequence with respect to the operator norm of the regular representation, including free groups and infinite Coxeter groups. We also give examples of absence of cut-off using free groups again.
\end{abstract}

\maketitle

\section{Introduction}

Positive definite functions have been at the heart of the development of harmonic analysis on locally compact groups since its beginning (see for instance \cite{godement1948fonctions} and \cite{eymard1964algebre}) and are still central in many works on this topic. They are connected to the fundamental notions of amenability, Property (T) and more generally to representation theory and the cohomology of affine isometric actions. In the present paper, we will consider an elementary problem involving these functions. Let $\Gamma$ be a discrete group and let $\varphi : \Gamma\to \C$ be a normalized positive definite function (see Definition \ref{de:positivedefinite}). Under mild hypothesis (see Proposition \ref{prop:convergencecondition}), the sequence $(\varphi^{k})_{k\in \N}$ converges point-wise to a normalized positive definite map and we want to know how fast the convergence is.

If $\Gamma$ is abelian, normalized positive definite functions $\varphi$ corresponds to probability measures $\mu_{\varphi}$ on the Pontryagin dual $\widehat{\Gamma}$ and under this identification, $\varphi^{k}$ corresponds to the $k$-th convolution power of $\mu_{\varphi}$ : $\mu_{\varphi}^{\ast k} = \mu_{\varphi^{k}}$. In other words, our problem is equivalent to the study of a random walk on the dual of $\Gamma$. It is well-known since the founding works of P. Diaconis and his coauthors (see for instance the survey \cite{diaconis1996cutoff}) that such random walks can exhibit a so-called \emph{cut-off phenomenon} : for a number of steps, $\mu_{\varphi}^{\ast k}$ stays at distance almost one from the limiting measure, and then it suddenly converges exponentially fast to it. If now $\Gamma$ is not assumed to be abelian, we can still think of our problem as a random walk on the dual compact \emph{quantum} group $\widehat{\Gamma}$ and ask whether a cut-off phenomenon occurs.

Before explaining the results of this work, let us give a rigorous definition of what we will call a "cut-off phenomenon" in the present article (see Equation \eqref{eq:totalvariation} for the definition of the norm $\|\cdot\|$) :

\begin{de*}
Let $(\Gamma_{N}, \varphi_{N})$ be a family of discrete groups with a state $\varphi_{N}$ on each of them. We say that the sequence $(\varphi_{N}^{k})_{k\in \N}$ has a \emph{pre-cut-off in the window $[k'(N), k(N)]$} if there exists constants $B, B', \lambda$ and $\lambda'$ such that for any $c > 0$ and $N$ large enough,
\begin{equation*}
\|\varphi^{k'(N) - c}_{N} - \delta_{e}\| \geqslant 1 - B'e^{-\lambda' c} \text{ and } \|\varphi^{k(N) + c}_{N} - \delta_{e}\| \leqslant Be^{-\lambda c}.
\end{equation*}
The pre-cut-off is moreover said to be a \emph{cut-off} if $k'(N) = k(N)$.
\end{de*}

The previous definition may seem technical and this is due to the fact that it is stronger than what is often called a cut-off phenomenon. Here is another, more appealing version : for any $\varepsilon > 0$,
\begin{equation*}
\lim_{N\to +\infty}\|\varphi^{(1-\varepsilon)k'(N)}_{N} - \delta_{e}\| = 1 \text{ and } \lim_{N\to +\infty}\|\varphi^{(1+\varepsilon)k(N)}_{N} - \delta_{e}\| = 0
\end{equation*}

One crucial point in the previous definition is that the cut-off phenomenon involves an infinite family of groups. This means that we will have to build families of discrete groups with positive definite functions on them in as natural a way as possible.

Let us now outline the contents of this work. After shortly recalling some preliminary facts in Section \ref{sec:preliminaries}, we give in Section \ref{sec:general} several general results concerning our problem. In particular, we give conditions for convergence to the canonical trace and absolute continuity. We also gather several computations which will be used to produce examples of cut-off. These examples are detailed in Section \ref{sec:cocycle}. They mainly rely on constructions of $1$-cocycles and include free products (Proposition \ref{prop:freeproduct}) and infinite Coxeter groups (Theorem \ref{thm:negativelength}). We end in Section \ref{sec:absence} with an example where there is no cut-off phenomenon, in the sense that the exponential convergence occurs from the first step on for $N$ large enough.

\subsection*{Acknowledgments}

We are grateful to Adam Skalski for the time he spent discussing this work and for his reading of a preliminary version. 

\section{Preliminaries}\label{sec:preliminaries}

Even though our motivation comes from random walks on compact quantum groups, we focus in the present work on the case of duals of discrete groups, for which everything can be written in a classical way through operator algebras. Here is a list of the notions needed (the reader may refer for instance to \cite[Sec F.4]{bekka2008kazhdan} for basics concerning operator algebras associated to discrete groups) : 
\begin{itemize}
\item The Hopf-$*$-algebra associated to $\widehat{\Gamma}$ is the group algebra $\C[\Gamma]$ together with the coproduct given by $\Delta(g) = g\otimes g$ for all $g\in \Gamma$,
\item The Haar state is the canonical trace $\delta_{e}$, the Dirac mass at the neutral element.
\item The GNS construction (see for instance \cite[Thm C.1.4]{bekka2008kazhdan}) with respect to $\delta_{e}$ (which is faithful) yields the left regular representation of $\Gamma$ on $\ell^{2}(\Gamma)$, which in turn provides an embedding of $\C[\Gamma]$ into the group von Neumann algebra $L(\Gamma)$.
\end{itemize}
A probability measure on $\widehat{\Gamma}$ is simply understood as a state on $\C[\Gamma]$ and it is well-known that these admit a group-theoretic description thanks to the notion of positive definite function.

\begin{de}\label{de:positivedefinite}
A function $\varphi : \Gamma\to \C$ is said to be \emph{positive definite} if for any integer $n$, any family $(c_{i})_{1\leqslant i\leqslant n}$ of complex numbers and any $g_{1}, \cdots, g_{n}\in \Gamma$,
\begin{equation*}
\sum_{i, j = 1}^{n}c_{i}\overline{c}_{j}\varphi(g_{i}g_{j}^{-1}) \geqslant 0.
\end{equation*}
If moreover $\varphi(e) = 1$, then $\varphi$ is said to be \emph{normalized}.
\end{de}

Given such a positive definite function $\varphi$, it extends by linearity to a map (again denoted by $\varphi$) on $\C[\Gamma]$ which is positive in the sense that $\varphi(x^{*}x) \geqslant 0$ for any $x\in \C[\Gamma]$. If moreover $\varphi$ is normalized, then this map is a \emph{state}. Moreover, all states arise in that way. Cut-off phenomena only make sense once a notion of convergence for positive definite functions is fixed. A natural choice is the norm $\|.\|_{L(\Gamma)^{*}}$ as linear maps on the von Neumann algebra $L(\Gamma)$, provided the states are indeed defined on $L(\Gamma)$ (this is not automatic, see Proposition \ref{prop:nodensity}). However, for practical reasons (see for instance the proof of Proposition \ref{prop:lowerbound}) we will consider instead the norm
\begin{equation}\label{eq:totalvariation}
\|\cdot\| = \frac{1}{2}\|\cdot\|_{L(\Gamma)^{*}},
\end{equation}
which corresponds to the usual definition of the \emph{total variation distance} in classical probability theory. One may argue that it would be more general to consider the norm $\|\cdot\|_{C^{*}(\Gamma)^{*}}$ as linear forms in the full C*-algebra of $\Gamma$. However, our tools for the computations will require either that the state has an $L^{2}$-density (which forces it to be in $L(\Gamma)^{*}\subset C^{*}(\Gamma)^{*}$ or to be able to do Borel functional calculus, which is not possible in $C^{*}(\Gamma)$. Therefore, it is better to restrict right from the beginning to the von Neumann algebraic setting.

The main tool for proving upper bounds is then the following \emph{Upper Bound Lemma}, which can simply be seen as the contractivity of the inclusion of $L^{1}$ into $L^{2}$ for finite von Neumann algebras. Note that it requires an extra assumption on $\varphi$, namely that it extends to a bounded linear map on $L(\Gamma)$. This is the analogue of the probability measure on $\widehat{\Gamma}$ being absolutely continuous with respect to the Haar measure.

\begin{lem}\label{lem:upperbounddiscrete}
Let $\Gamma$ be a discrete group and let $\varphi$ be a positive definite function on $\Gamma$ which extends to a bounded map on $L(\Gamma)$. Then,
\begin{equation*}
\|\varphi^{k} - \delta_{e}\|^{2} \leqslant \frac{1}{4}\sum_{g\neq e}\vert\varphi(g)\vert^{2k}.
\end{equation*}
\end{lem}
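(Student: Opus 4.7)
The plan is to realise $\varphi^{k} - \delta_{e}$ as an element of the non-commutative $L^{1}$-space of the tracial von Neumann algebra $(L(\Gamma), \delta_{e})$ and then apply the contraction $\|\cdot\|_{1} \leqslant \|\cdot\|_{2}$ that holds in any probability von Neumann algebra via Cauchy--Schwarz for the trace. Since $\|\varphi^{k} - \delta_{e}\|_{L(\Gamma)^{*}} \leqslant 2$ always, I may furthermore assume $\sum_{g \neq e} |\varphi(g)|^{2k} < +\infty$, as the stated inequality is otherwise trivial.

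First, I would check that $\varphi^{k}$ extends to a normal state on $L(\Gamma)$. By hypothesis $\varphi$ lies in the predual $L(\Gamma)_{*}$, and since pointwise multiplication of positive definite functions corresponds to the convolution $(\varphi \otimes \psi) \circ \Delta$ of the corresponding states via the coproduct $\Delta(\lambda(g)) = \lambda(g) \otimes \lambda(g)$, normality is preserved under this operation; an induction on $k$ yields $\varphi^{k} \in L(\Gamma)_{*}$. Under the identification $L(\Gamma)_{*} \cong L^{1}(L(\Gamma), \delta_{e})$, write $\varphi^{k} - \delta_{e}$ as $x \mapsto \delta_{e}(a_{k} x)$ for some $a_{k} \in L^{1}$. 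Testing against $x = \lambda(g)$ forces the Fourier expansion
\begin{equation*}
a_{k} = \sum_{h \in \Gamma}\bigl(\overline{\varphi(h)^{k}} - \delta_{h = e}\bigr)\lambda(h),
\end{equation*}
whose Plancherel norm squared is exactly $\sum_{g \neq e} |\varphi(g)|^{2k}$. Under the standing assumption, the series converges in $L^{2}(L(\Gamma), \delta_{e})$, so $a_{k}$ genuinely lies in $L^{2} \subset L^{1}$.

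The conclusion is then Cauchy--Schwarz for the trace: $\|a_{k}\|_{1} = \delta_{e}(|a_{k}|) \leqslant \delta_{e}(|a_{k}|^{2})^{1/2}\delta_{e}(1)^{1/2} = \|a_{k}\|_{2}$, whence
\begin{equation*}
\|\varphi^{k} - \delta_{e}\|_{L(\Gamma)^{*}} = \|a_{k}\|_{1} \leqslant \Bigl(\sum_{g \neq e} |\varphi(g)|^{2k}\Bigr)^{1/2},
\end{equation*}
and dividing by $2$ and squaring yields the advertised bound. The main subtlety I expect is justifying that the formal Fourier-series candidate really represents the abstract $L^{1}$-density of $\varphi^{k} - \delta_{e}$ as a normal functional on the whole of $L(\Gamma)$; this reduces to a weak-$*$ density argument for $\C[\Gamma] \subset L(\Gamma)$ together with the uniqueness in the predual identification.
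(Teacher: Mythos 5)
Your proof is correct and takes exactly the approach the paper intends: the lemma is stated there without proof, described only as ``the contractivity of the inclusion of $L^{1}$ into $L^{2}$ for finite von Neumann algebras,'' which is precisely your trace Cauchy--Schwarz $\|a_{k}\|_{1}\leqslant\|a_{k}\|_{2}$ applied to the Fourier/Plancherel density of $\varphi^{k}-\delta_{e}$. The details you supply (reduction to a finite right-hand side, normality of $\varphi^{k}$ via the coproduct, identification of the $L^{2}$-density, cf.\ the remark following the lemma) are exactly the ones the author leaves implicit.
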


\begin{rem}
As soon as the right-hand side is finite, $\varphi^{k}$ extends to a normal map on $L(\Gamma)$ and consequently the left-hand side is well-defined. Setting it to be infinite otherwise, the inequality makes sense for any positive definite function on $\Gamma$.
\end{rem}

In the sequel we will restrict to discrete groups which are finitely generated and growth considerations will come into the picture so that we recall some elementary facts. If $S$ denotes a symmetric generating set not containing the neutral element $e$, the corresponding word length is defined by
\begin{equation*}
\vert g\vert_{S} = \min\{k\in \N \mid g\in S^{k}\}.
\end{equation*}
Denoting by $B(i)$ the corresponding ball of radius $i$, it follows from submultiplicativity that the sequence $\vert B(i)\vert^{1/i}$ converges to a limite denoted by $\omega(S)$. If this number is $1$, then the group has \emph{subexponential growth} while it is said to have \emph{exponential growth} otherwise.
If we denote by $S(i)$ the sphere of radius $i$ for this metric and by $s_{i}$ its cardinality, it is easy to see that the sequence $s_{i}^{1/i}$ also converges to $\omega(S)$. Moreover,
\begin{equation*}
s_{i}\leqslant \vert S\vert(\vert S\vert-1)^{i-1}.
\end{equation*}
Note that equality for all $i$ holds if and only if the group $\Gamma$ is free on $S$.

\section{General results}\label{sec:general}

In this section we give some general results concerning the total variation distance between powers of a fixed positive definite function and the Haar state $\delta_{e}$. We will in particular prove estimates which will yield cut-off phenomena in the examples of Section \ref{sec:cocycle}.

\subsection{Simple convergence}

Before considering issues related to norm convergence, let us note that it implies simple convergence. We should therefore first understand under which condition the sequence $\varphi^{k}$ converges simply to $\delta_{e}$. For compact groups, it is known that a random walk converges to the Haar state if and only if its support is not contained in a closed subgroup or in a coset with respect to a normal subgroup (see for instance \cite[Thm 3.2.4]{stromberg1960probabilities}). For general compact (and even finite) quantum groups, generalizing this equivalence is still an open problem. In our case however, we can settle it. Let us first give a definition for convenience :

\begin{de}
A normalized positive definite function $\varphi$ on a discrete group $\Gamma$ is said to be \emph{strict} if $\vert\varphi(g)\vert < 1$ for all $g\neq e$.
\end{de}

It is clear that simple convergence to $\delta_{e}$ is equivalent to the initial function being strict. A typical example of a non-strict positive definite function is the counit $\varepsilon : \Gamma\to \C$ sending each $g$ to $1$. More generally, any character (i.e. one-dimensional representation) of $\Gamma$ is not strict. The next result says that this is basically the only obstruction.

\begin{prop}\label{prop:convergencecondition}
Let $\Gamma$ be a discrete group and let $\varphi : \Gamma\to \C$ be a normalized positive definite function. The following are equivalent :
\begin{enumerate}
\item $\varphi$ is not strict,
\item $\varphi$ coincides with a character on a non-trivial subgroup $\Lambda\subset \Gamma$,
\item $\varphi$ is bimodular with respect to a non-trivial subgroup $\Lambda\subset\Gamma$ in the sense that for any $h\in \Lambda$ and $g\in \Gamma$,
\begin{equation*}
\varphi(gh) = \varphi(g)\varphi(h) = \varphi(hg).
\end{equation*}
\end{enumerate}
\end{prop}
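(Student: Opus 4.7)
The plan is to exploit the GNS construction attached to $\varphi$: there is a unitary representation $\pi$ of $\Gamma$ on a Hilbert space $H$ with a unit cyclic vector $\xi$ such that $\varphi(g) = \langle \pi(g)\xi, \xi\rangle$. The single observation behind the whole proof is that Cauchy-Schwarz gives $|\varphi(h)| \leqslant \|\pi(h)\xi\|\,\|\xi\| = 1$, with equality if and only if $\pi(h)\xi = \varphi(h)\xi$. Each of the three conditions in the proposition will be shown to be equivalent to a statement about a non-trivial set of elements $h$ for which $\pi(h)\xi$ is collinear to $\xi$.

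I would prove the implications in the cycle $(3)\Rightarrow(1)\Rightarrow(2)\Rightarrow(3)$. The implication $(3)\Rightarrow(1)$ needs no GNS: picking $h\in\Lambda\setminus\{e\}$ and applying bimodularity with $g = h^{-1}$ gives
\begin{equation*}
1 = \varphi(e) = \varphi(h^{-1}h) = \varphi(h^{-1})\varphi(h) = \overline{\varphi(h)}\varphi(h) = |\varphi(h)|^{2},
\end{equation*}
so $\varphi$ is not strict. For the main implication $(1)\Rightarrow(2)$, I would set
\begin{equation*}
\Lambda := \{h\in\Gamma : |\varphi(h)| = 1\} = \{h\in\Gamma : \pi(h)\xi = \varphi(h)\xi\},
\end{equation*}
which is non-trivial by hypothesis. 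Both closure properties now follow immediately from the collinearity formulation: $\pi(h_{1}h_{2})\xi = \varphi(h_{1})\varphi(h_{2})\xi$ has norm one, giving $h_{1}h_{2}\in\Lambda$ together with $\varphi(h_{1}h_{2}) = \varphi(h_{1})\varphi(h_{2})$; and $\pi(h^{-1})\xi = \pi(h)^{-1}\xi = \overline{\varphi(h)}\xi$ gives $h^{-1}\in\Lambda$. Hence $\Lambda$ is a subgroup and $\varphi|_{\Lambda}$ is a character. Finally, for $(2)\Rightarrow(3)$, the fact that $\varphi|_{\Lambda}$ is a character forces $|\varphi(h)| = 1$ on $\Lambda$, hence again $\pi(h)\xi = \varphi(h)\xi$, and one computes directly
\begin{equation*}
\varphi(gh) = \langle \pi(g)\pi(h)\xi, \xi\rangle = \varphi(h)\langle \pi(g)\xi, \xi\rangle = \varphi(g)\varphi(h),
\end{equation*}
together with the symmetric identity $\varphi(hg) = \langle \pi(g)\xi, \pi(h^{-1})\xi\rangle = \varphi(h)\varphi(g)$ obtained from $\pi(h)^{*}\xi = \overline{\varphi(h)}\xi$.

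There is no real obstacle in this proof: once the GNS data is in place and the rigidity of Cauchy-Schwarz is exploited, every implication reduces to a short manipulation. The only step requiring a small insight is guessing the correct candidate subgroup in $(1)\Rightarrow(2)$, namely the level set $\{|\varphi| = 1\}$; once this is identified as the stabilizer of the line $\C\xi$ under $\pi$, it becomes evident both that it is a subgroup and that $\varphi$ restricted to it is multiplicative of modulus one.
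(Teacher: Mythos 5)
Your proof is correct and follows essentially the same route as the paper: the GNS construction, the equality case of Cauchy--Schwarz giving $\pi(h)\xi = \varphi(h)\xi$ on the level set $\{|\varphi|=1\}$, and the identification of that set as the subgroup $\Lambda$. The only cosmetic difference is that you close the cycle as $(3)\Rightarrow(1)\Rightarrow(2)\Rightarrow(3)$ while the paper uses $(1)\Rightarrow(2)\Rightarrow(3)\Rightarrow(1)$, which changes nothing of substance.
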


\begin{proof}
$(1)\Rightarrow (2)$ : Assume that $\varphi$ is not strict and set
\begin{equation*}
\Lambda = \{h\in \Gamma\mid \vert\varphi(h)\vert = 1\}.
\end{equation*}
The GNS construction provides us with a unitary representation $\pi : \Gamma\to B(H)$ on a Hilbert space $H$ and a unit vector $\xi\in H$ such that for all $g\in \Gamma$, $\varphi(g) = \langle\pi(g)\xi, \xi\rangle$. For $h\in \Lambda$, the Cauchy-Schwarz inequality being an equality, $\pi(h)\xi$ is colinear to $\xi$, hence $\pi(h)\xi = \varphi(h)\xi$. Thus, for any $g\in \Gamma$,
\begin{equation*}
\vert\varphi(gh)\vert = \vert\langle\pi(gh)\xi, \xi\rangle\vert = \vert \varphi(h)\langle\pi(g)\xi, \xi\rangle\vert = \vert\varphi(g)\vert.
\end{equation*}
As a first consequence, $\Lambda$ is stable under multiplication. Since moreover $\varphi(g^{-1}) = \overline{\varphi(g)}$ for any $g\in \Gamma$, we conclude that $\Lambda$ is a subgroup. Moreover, $\varphi : \Lambda\to\C$ is a character and for any $h\in \Lambda$ and $g\in \Gamma$, $\varphi(gh) = \varphi(g)\varphi(h) = \varphi(hg)$.

$(2)\Rightarrow (3)$ : Considering again the GNS representation of $\varphi$, the assertion is equivalent to the fact that $\C\xi$ is  globally invariant under the action of $\Lambda$, from which the bimodularity follows.

$(3)\Rightarrow (1)$ : If $\varphi$ is $\Lambda$-bimodular, then for any $h\in \Lambda$ we have
\begin{equation*}
\vert\varphi(h)\vert^{2} = \overline{\varphi(h)}\varphi(h) = \varphi(h^{-1})\varphi(h) = \varphi(e) = 1
\end{equation*}
so that $\varphi$ is not strict.
\end{proof}


\begin{rem}
If $\Gamma$ is abelian, then there is a probability measure $\mu_{\varphi}$ on the dual compact abelian group $\widehat{\Gamma}$ such that for all $g\in \Gamma$,
\begin{equation*}
\varphi(g) = \int_{\widehat{\Gamma}}\chi(g)\dd\mu_{\varphi}(\chi).
\end{equation*}
The second condition in Proposition \ref{prop:convergencecondition} yields a character $\eta\in \widehat{\Lambda}$ such that $\varphi_{\vert \Lambda} = \eta$, hence for any $h\in \Lambda$,
\begin{equation*}
\int_{\widehat{\Gamma}}(\eta^{-1}\chi)(h)\dd\mu_{\varphi}(\chi) = 1.
\end{equation*}
This implies that the support of $\mu$ is contained in $\eta\Lambda^{\perp}$, where $\Lambda^{\perp} = \{\chi\in \widehat{\Gamma} \mid \Lambda\subset\ker(\chi)\}$ is the annihilator of $\Lambda$. Since any subgroup is normal in the abelian case, we recover the classical criterion.
\end{rem}

\subsection{Absolute continuity}

Once simple convergence is known, we must determine whether $\varphi$ has a normal extension to $L(\Gamma)$. By \cite[Thm V.2.18]{takesaki2002theory}, this is equivalent to the existence of an element $a_{\varphi}$ in the predual $L^{1}(\widehat{\Gamma})$ of $L(\Gamma)$ such that
\begin{equation*}
\varphi(g) = \delta_{e}(a_{\varphi}g)
\end{equation*}
for all $g\in \Gamma$. Computing $L^{1}$-norms is difficult in general, so that we will use $L^{2}$-norms instead. Convergence then involves the rate of decay of $\varphi$ and to make this more precise we introduce the following quantities :
\begin{equation*}
\varphi^{+}(i) = -\inf_{g\in S(i)}\ln(\vert \varphi(g)\vert) \text{ and } \varphi^{-}(i) = -\sup_{g\in S(i)}\ln(\vert \varphi(g)\vert).
\end{equation*}
The definition may seem unnatural but is designed to fit with growth conditions for cocycles, which will be our main source of examples in Section \ref{sec:cocycle}.

\begin{prop}\label{prop:L2density}
Let $\Gamma$ be a discrete group and let $S$ be a finite symmetric generating set. Then,
\begin{itemize}
\item If $\liminf_{i}\frac{\varphi^{-}(i)}{i} > \frac{\ln(\omega(S))}{2k}$, then $\varphi^{k}$ has an $L^{2}$-density, hence also an $L^{1}$-density, with respect to $\delta_{e}$,
\item If $\liminf_{i}\frac{\varphi^{+}(i)}{i} < \frac{\ln(\omega(S))}{2k}$, then $\varphi^{k}$ has no $L^{2}$-density with respect to $\delta_{e}$.
\end{itemize}
\end{prop}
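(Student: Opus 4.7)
The plan is to reduce the existence of an $L^{2}$-density for $\varphi^{k}$ to the convergence of the explicit series $\sum_{g}|\varphi(g)|^{2k}$, and then to control that series through the sphere decomposition of $\Gamma$ using the prescribed asymptotic comparisons between $\varphi^{\pm}$ and $\omega(S)$.

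First I would identify $L^{2}(\widehat{\Gamma})$ with $\ell^{2}(\Gamma)$ via the GNS construction for $\delta_{e}$. If $a = \sum_{h} c_{h} h \in \ell^{2}(\Gamma)$ is an $L^{2}$-density for $\varphi^{k}$, the defining equation $\delta_{e}(a\cdot g) = \varphi(g)^{k}$ forces $c_{g^{-1}} = \varphi(g)^{k}$ for every $g\in \Gamma$. Hence the only possible candidate is $a = \sum_{g}\varphi(g^{-1})^{k} g$, and it lies in $\ell^{2}(\Gamma)$ if and only if $\sum_{g\in \Gamma}|\varphi(g)|^{2k} < +\infty$. Since $L(\Gamma)$ is tracial (in particular finite), one has $L^{2}(\widehat{\Gamma}) \subset L^{1}(\widehat{\Gamma})$ by Cauchy--Schwarz, so an $L^{2}$-density is automatically an $L^{1}$-density; the problem thus reduces to deciding convergence of the above series.

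Next I would split the series along spheres: for every $i\geqslant 1$ and every $g\in S(i)$ one has $e^{-\varphi^{+}(i)} \leqslant |\varphi(g)| \leqslant e^{-\varphi^{-}(i)}$, whence
\[s_{i}e^{-2k\varphi^{+}(i)} \leqslant \sum_{g\in S(i)}|\varphi(g)|^{2k} \leqslant s_{i}e^{-2k\varphi^{-}(i)}.\]
For the first item, assume $\liminf_{i}\varphi^{-}(i)/i > \ln(\omega(S))/(2k)$ and fix $\alpha$ strictly between them; then $\varphi^{-}(i)\geqslant \alpha i$ for $i$ large, and since $s_{i}^{1/i}\to \omega(S)$, we may choose $\varepsilon > 0$ small enough that simultaneously $s_{i}\leqslant (\omega(S)+\varepsilon)^{i}$ eventually and $(\omega(S)+\varepsilon)e^{-2k\alpha} < 1$. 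The upper sphere bound is then dominated by a convergent geometric series. For the second item, I would pick $\beta$ strictly between $\liminf_{i}\varphi^{+}(i)/i$ and $\ln(\omega(S))/(2k)$, extract a subsequence $(i_{n})$ with $\varphi^{+}(i_{n})\leqslant \beta i_{n}$, and use $s_{i_{n}}\geqslant (\omega(S)-\varepsilon)^{i_{n}}$ with $\varepsilon$ chosen so that $(\omega(S)-\varepsilon)e^{-2k\beta} > 1$. The lower sphere bound then produces a sphere contribution at least $\bigl((\omega(S)-\varepsilon)e^{-2k\beta}\bigr)^{i_{n}} \to +\infty$, so the full series diverges and no $L^{2}$-density can exist.

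The only mildly delicate point is the explicit identification of the density in terms of the Fourier coefficients of $\varphi^{k}$, together with the observation that this candidate is automatically the unique one; once the problem is translated into the $\ell^{2}$-summability of $\varphi^{k}$ on $\Gamma$, everything reduces to a routine comparison between the geometric rates $\omega(S)$ and $e^{-2k\varphi^{\pm}(i)/i}$.
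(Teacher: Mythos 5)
Your proof is correct and follows essentially the same route as the paper: reduce to the $\ell^{2}$-summability of $g\mapsto\varphi(g)^{k}$, decompose over spheres to compare with $\sum_{i}s_{i}e^{-2k\varphi^{\pm}(i)}$, and conclude by a root-test/geometric-series comparison using $s_{i}^{1/i}\to\omega(S)$. The only difference is that you spell out two points the paper leaves implicit — the explicit identification of the unique candidate density $\sum_{g}\varphi(g^{-1})^{k}g$ and the $\varepsilon$-bookkeeping behind the Cauchy radical test — both of which are fine.
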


\begin{proof}
We start from the straightforward inequalities
\begin{equation*}
\sum_{i=0}^{+\infty}s_{i} e^{-2k\varphi^{+}(i)}\leqslant \sum_{g\in \Gamma}\vert\varphi(g)\vert^{2k} \leqslant \sum_{i=0}^{+\infty}s_{i} e^{-2k\varphi^{-}(i)}.
\end{equation*}
For the right-hand side, the Cauchy radical test gives a sufficient condition for convergence, namely
\begin{equation*}
\limsup_{i}s_{i}^{1/i}e^{-2k\varphi^{-}(i)/i} < 1.
\end{equation*}
Because $s_{i}^{1/i}$ converges to $\omega(S)$, this gives the first part of the statement. On the other hand, if the series in the middle converges, then so does the one on the left-hand side and using the Cauchy radical test again yields the second part of the statement.
\end{proof}

Proposition \ref{prop:L2density} settles the problem of the existence of $L^{2}$-densities, which is what we need in order to apply Lemma \ref{lem:upperbounddiscrete}. However, it can be that $\varphi$ has an $L^{1}$-density without having an $L^{2}$-density. We will now give a criterion for absolute continuity for some particular class of groups. The idea is to show that $\varphi$ is not bounded on $L(\Gamma)$ by evaluating it on suitable test functions. If a length function on $\Gamma$ is fixed, the natural candidates are the elements
\begin{equation*}
\chi_{i} = \sum_{g\in S(i)} g.
\end{equation*}
Doing this requires a control on the norm of these elements in $L(\Gamma)$ in terms of the sizes of the spheres and such a control is given by the Property of Rapid Decay \cite{jolissaint1990rapidly}. We however nedd an extra positivity assumption on $\varphi$.

\begin{prop}\label{prop:nodensity}
Let $\Gamma$ be a discrete group of exponential growth with the Property of Rapid Decay and let $\varphi$ be a positive definite function on $\Gamma$ taking only positive values. Then, $\varphi^{k}$ extends to a bounded normal functional on $L(\Gamma)$ only if
\begin{equation*}
\liminf_{i}\frac{\varphi_{i}^{+}}{i} \geqslant  \frac{\ln(\omega(S))}{2k}.
\end{equation*}
\end{prop}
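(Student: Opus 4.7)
The plan is to test the functional $\varphi^{k}$ against the sphere sums $\chi_{i} = \sum_{g \in S(i)} g$ and compare two estimates: a lower bound on $\varphi^{k}(\chi_{i})$ coming from the positivity assumption, and an upper bound on $\|\chi_{i}\|_{L(\Gamma)}$ coming from Property RD. I argue the contrapositive: if $\varphi^{k}$ is bounded on $L(\Gamma)$, then $\liminf_{i}\varphi^{+}(i)/i \geq \ln(\omega(S))/(2k)$.

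First, I unpack the quantity $\varphi^{+}(i)$. By definition $\varphi^{+}(i) = -\inf_{g\in S(i)}\ln(|\varphi(g)|)$, so every $g \in S(i)$ satisfies $|\varphi(g)| \geq e^{-\varphi^{+}(i)}$. Since $\varphi$ takes only positive values, $\varphi(g) = |\varphi(g)|$, so $\varphi^{k}(g) \geq e^{-k\varphi^{+}(i)}$ uniformly on $S(i)$. Summing over the sphere,
\begin{equation*}
\varphi^{k}(\chi_{i}) \;=\; \sum_{g\in S(i)} \varphi(g)^{k} \;\geq\; s_{i}\, e^{-k\varphi^{+}(i)}.
\end{equation*}
This is where positivity of $\varphi$ is essential; otherwise cancellations could destroy the lower bound.

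Next, Property RD (\cite{jolissaint1990rapidly}) provides a polynomial $P$ such that $\|x\|_{L(\Gamma)} \leq P(i)\|x\|_{2}$ for every $x \in \C[\Gamma]$ supported in $S(i)$. Applied to $\chi_{i}$, whose $\ell^{2}$-norm is $\sqrt{s_{i}}$, this yields $\|\chi_{i}\|_{L(\Gamma)} \leq P(i)\sqrt{s_{i}}$. Assuming $\varphi^{k}$ extends to a bounded functional of norm $C$ on $L(\Gamma)$, we combine:
\begin{equation*}
s_{i}\, e^{-k\varphi^{+}(i)} \;\leq\; \varphi^{k}(\chi_{i}) \;\leq\; C\,P(i)\sqrt{s_{i}},
\end{equation*}
hence $\sqrt{s_{i}}\, e^{-k\varphi^{+}(i)} \leq C\,P(i)$.

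Finally, taking logarithms, dividing by $i$, and letting $i \to \infty$, one uses $s_{i}^{1/i} \to \omega(S)$ (with $\omega(S) > 1$ by the exponential growth hypothesis) and $P(i)^{1/i} \to 1$. Taking $\limsup$ of the left-hand side transforms the subtracted term into a $\liminf$ and gives
\begin{equation*}
\frac{\ln(\omega(S))}{2} - k\,\liminf_{i} \frac{\varphi^{+}(i)}{i} \;\leq\; 0,
\end{equation*}
which is the desired conclusion. The argument is short; the only real ingredients are the positivity of $\varphi$ (which converts a sum into a uniform lower bound on the sphere) and the polynomial norm control from Property RD. I expect the main subtlety in writing the proof to be a clean handling of the $\limsup$/$\liminf$ exchange at the very last step, though it amounts to a one-line verification.
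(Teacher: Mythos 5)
Your proof is correct and follows essentially the same route as the paper: evaluate $\varphi^{k}$ on the sphere sums $\chi_{i}$, use positivity to get the lower bound $s_{i}e^{-k\varphi^{+}(i)}$, bound $\|\chi_{i}\|_{L(\Gamma)}$ by $P(i)\sqrt{s_{i}}$ via Property RD, and compare logarithmic growth rates using $s_{i}^{1/i}\to\omega(S)$. The final $\limsup/\liminf$ step you flag is even simpler than you anticipate, since $\ln(s_{i})/i$ actually converges.
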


\begin{proof}
The Property of Rapid Decay provides us with a polynomial $P$ such that for any element $x\in S(i)$, $\|x\|_{\infty}\leqslant P(i)\|x\|_{2}$. Thus, since $\|\chi_{i}\|_{2} = \sqrt{s_{i}}$,
\begin{equation*}
\frac{\vert\varphi^{k}(\chi_{i})\vert}{\|\chi_{i}\|_{\infty}}\geqslant \frac{1}{P(i)}e^{\frac{\ln(s_{i})}{2}-k\varphi_{i}^{+}}.
\end{equation*}
If the left-hand side is bounded, then there exists $C > 0$ such that for $i$ large enough $\frac{\ln(s_{i})}{2}-k\varphi_{i}^{+}\leqslant C$. Then,
\begin{equation*}
k\frac{\varphi_{i}^{+}}{i}\geqslant \frac{\ln(s_{i})}{2i} - \frac{C}{i}.
\end{equation*}
and because $\Gamma$ is assumed to have exponential growth, the result follows.
\end{proof}

Recall that an amenable group has the Property of Rapid Decay if and only if it has polynomial growth by \cite[Cor 3.1.8]{jolissaint1990rapidly}. The previous proposition therefore only concerns non-amenable groups.

\subsection{Estimates}

In view of the results of the previous section, in order to be able to use Lemma \ref{lem:upperbounddiscrete} we need to make an assumption on the rate of decay of $\varphi$. Proposition \ref{prop:L2density} suggests the condition $\liminf_{i}\varphi^{+}(i)/i > 0$ but we will need a stronger one :

\begin{de}
Let $\Gamma$ be a discrete group and let $\varphi$ be a positive definite function on $\Gamma$. It is said to have \emph{exponential decay} if there exists $\alpha > 0$ and a finite symmetric generating set $S$ such that for all $g\in \Gamma$,
\begin{equation*}
\vert\varphi(g)\vert \leqslant e^{-\alpha\vert g\vert_{S}}.
\end{equation*}
Having exponential decay is independent from the choice of a finite symmetric generating set $S$.
\end{de}

\begin{rem}\label{rem:radicalgrowth}
One may expect a bound of the form $C_{0}e^{-\alpha\vert g\vert_{S}}$ in the above definition, but if we moreover assume that $\varphi$ is strict then this is equivalent to our definition. Indeed, there exists $n_{0}$ and $\alpha' > 0$ such that for $\vert g\vert_{S} > n_{0}$, $C_{0}e^{-\alpha\vert g\vert_{S}}\leqslant e^{-\alpha'\vert g\vert_{S}}$. Moreover, because $\varphi$ is strict,
\begin{equation*}
\alpha'' = \inf_{g\in B(n_{0})\setminus\{e\}}\frac{-\ln(\vert \varphi(g)\vert)}{\vert g\vert_{S}} > 0
\end{equation*}
and setting $\alpha = \min(\alpha', \alpha'')$ yields the result. We are therefore including strictness in the definition of exponential decay.
\end{rem}

By definition, $\alpha$ is less than $\liminf_{i}\varphi^{-}(i)/i$ so that the later is the best potential decay rate, and since it depends on $S$, we will denote it by $\alpha(S)$. With this in hand, we can give a general upper bound statement. The condition for the existence of an $L^{2}$-density suggests that the threshold for exponential convergence should be $\ln(\omega(S))/2\alpha(S)$. However, because $\omega(S)$ is an infimum we cannot use it to bound the series appearing in Lemma \ref{lem:upperbounddiscrete}. We will therefore have to take some room and use $\ln(\vert S\vert - 1)/2\alpha(S)$ instead.

\begin{prop}\label{prop:upperbound}
Let $\Gamma$ be a discrete group with finite symmetric generating set $S$ and let $\varphi : \Gamma\to \C$ be a positive definite function with exponential decay. Then, for any $c > 0$ and $k = \ln(\vert S\vert-1)/2\alpha(S) + c$,
\begin{equation*}
\|\varphi^{k} - \delta_{e}\| \leqslant \frac{e^{-\alpha(S) c}}{\sqrt{2 - 2e^{-\alpha(S) c}}}.
\end{equation*}
\end{prop}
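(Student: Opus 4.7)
The plan is to apply the Upper Bound Lemma (Lemma \ref{lem:upperbounddiscrete}) and reduce the estimate to summing a geometric series, exploiting both the exponential decay of $\varphi$ and the ball-growth bound $s_i \leq |S|(|S|-1)^{i-1}$ recalled at the end of Section \ref{sec:preliminaries}.

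More precisely, I would first stratify the sum $\sum_{g \neq e} |\varphi(g)|^{2k}$ by the spheres $S(i)$ for the word length associated to $S$. By definition of $\alpha(S)$ (specifically, because $\alpha \leq \liminf_i \varphi^{-}(i)/i$ and, more directly, $|\varphi(g)| \leq e^{-\alpha(S)|g|_S}$ from the exponential decay assumption), each term satisfies $|\varphi(g)|^{2k} \leq e^{-2k\alpha(S) i}$ on $S(i)$. Combined with the sphere bound, this gives
\begin{equation*}
\sum_{g\neq e} |\varphi(g)|^{2k} \leq \sum_{i\geq 1} s_i\, e^{-2k\alpha(S) i} \leq \frac{|S|}{|S|-1}\sum_{i\geq 1} \bigl((|S|-1) e^{-2k\alpha(S)}\bigr)^{i}.
\end{equation*}

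Next I would substitute $k = \ln(|S|-1)/2\alpha(S) + c$, which is exactly designed to make the ratio of the geometric series equal to $q := e^{-2\alpha(S) c} < 1$. Summing yields
\begin{equation*}
\sum_{g\neq e} |\varphi(g)|^{2k} \leq \frac{|S|}{|S|-1}\cdot\frac{e^{-2\alpha(S) c}}{1-e^{-2\alpha(S) c}} = \frac{|S|}{(|S|-1)(1+e^{-\alpha(S) c})}\cdot\frac{2e^{-2\alpha(S) c}}{2-2e^{-\alpha(S) c}},
\end{equation*}
after factoring $1-e^{-2\alpha(S) c} = (1-e^{-\alpha(S) c})(1+e^{-\alpha(S) c})$. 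Since $|S|\geq 2$ and $e^{-\alpha(S) c}\geq 0$, the prefactor $|S|/\bigl((|S|-1)(1+e^{-\alpha(S) c})\bigr)$ is at most $2$, which absorbs the $1/4$ from Lemma \ref{lem:upperbounddiscrete} into the desired denominator. Taking square roots then gives the announced inequality.

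I do not anticipate a serious obstacle: the only subtlety is the bookkeeping that turns $1-e^{-2\alpha(S) c}$ into the square-root factor $\sqrt{2-2e^{-\alpha(S) c}}$, which forces the slight loss that prevents us from working with $\omega(S)$ directly and is precisely the reason $\ln(|S|-1)$ (rather than $\ln(\omega(S))$) appears in the threshold, as anticipated in the paragraph preceding the proposition.
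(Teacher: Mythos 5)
Your proposal is correct and follows essentially the same route as the paper: the Upper Bound Lemma, stratification over spheres with the bound $s_i \leqslant \vert S\vert(\vert S\vert-1)^{i-1}$, and summation of the resulting geometric series after substituting $k = \ln(\vert S\vert-1)/2\alpha(S)+c$. The only (harmless) difference is in the final bookkeeping: you factor $1-e^{-2\alpha(S)c}$ as a difference of squares to land exactly on the stated denominator $2-2e^{-\alpha(S)c}$, whereas the paper bounds the prefactor $\tfrac{1}{4}\vert S\vert/(\vert S\vert-1)$ by $\tfrac{1}{2}$ directly and obtains the slightly stronger denominator $2-2e^{-2\alpha(S)c}$.
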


\begin{proof}
Lemma \ref{lem:upperbounddiscrete} yields
\begin{align*}
\|\varphi^{k} - \delta_{e}\|^{2} & \leqslant \frac{1}{4}\sum_{g\neq e}\vert\varphi(g)\vert^{2k} \leqslant \frac{1}{4}\sum_{i=1}^{+\infty}s_{i} e^{-2k \alpha(S)i} \\
& \leqslant \sum_{i=1}^{+\infty}\frac{\vert S\vert}{4}(\vert S\vert - 1)^{i-1} e^{-2k \alpha(S) i} \\
& = \frac{\vert S\vert}{4} e^{-2k\alpha(S)}\frac{1}{1 - (\vert S\vert - 1)e^{-2k\alpha(S)}}.
\end{align*}
For $k = \ln(\vert S\vert)/2\alpha(S) + c$ we therefore get
\begin{equation*}
\|\varphi^{k} - \delta_{e}\|^{2} \leqslant \frac{1}{4}\frac{\vert S\vert}{(\vert S\vert - 1)}\frac{e^{-2\alpha(S) c}}{1 - e^{-2\alpha(S) c}} \leqslant \frac{e^{-2\alpha(S) c}}{2 - 2e^{-2\alpha(S) c}}
\end{equation*}
and the result follows.
\end{proof}

To establish a cut-off phenomenon, it is necessary to have both an upper and a lower bound. Usually, the hard work concerns the upper bound but in our setting we will see that one needs a different argument for each families of group to obtain a lower bound. For the moment we will simply prove a general lower bound which only requires $\varphi$ to take positive values (which will always be the case in our examples). Recall that any finite generating set $S$ of cardinality $n$ gives rise to a quotient map $p : \F_{n}\to \Gamma$. Let $r_{i}$ be the number of reduced words in $\ker(p)$ of length $i$. By \cite[Prop 1]{cohen1982cogrowth}, $r_{i}^{1/i}$ converges (once vanishing terms are removed) to a number $\gamma(S)$ called the \emph{cogrowth rate} of $S$.

\begin{prop}\label{prop:lowerbound}
Let $\Gamma$ be a discrete group with a finite generating set $S$ and let $\varphi$ be a positive definite function on $\Gamma$ such that $\varphi(g)\geqslant 0$ for all $g\in S$. Then, for any $c > 0$ and $k = \ln(\vert S\vert - 1)/2\varphi^{+}(1) - c$,
\begin{equation*}
\|\varphi^{k} - h\| \geqslant 1 - 4\left(2 + 3\frac{\gamma(S)^{2}}{\vert S\vert}\right)e^{-2\varphi^{+}(1)c}.
\end{equation*}
\end{prop}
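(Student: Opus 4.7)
My plan is to exhibit a spectral projection in $L(\Gamma)$ that distinguishes the states $\varphi^k$ and $\delta_e$, using the self-adjoint test element $X := \chi_S = \sum_{g\in S} g$. First I would compute the relevant moments. Under the trace $\delta_e$ one has $\delta_e(X) = 0$ and $\delta_e(X^2) = |S|$, since the only pairs $(g,h) \in S \times S$ with $gh = e$ are those of the form $(g, g^{-1})$. Under $\varphi^k$, the hypothesis $\varphi(g) \geq 0$ for $g\in S$ together with the definition $\varphi^{+}(1) = -\inf_{g\in S}\ln|\varphi(g)|$ yields
\begin{equation*}
\mu := \varphi^k(X) = \sum_{g\in S} \varphi(g)^k \geq |S|\, e^{-k\varphi^{+}(1)}.
\end{equation*}

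The cogrowth enters through a bound on the operator norm of $X$. By the Grigorchuk--Cohen--Szwarc cogrowth formula for the spectral radius of the Cayley graph one has $\|X\|_{L(\Gamma)} \leq \gamma(S) + (|S|-1)/\gamma(S)$, and combined with the Kesten-type inequality $\gamma(S) \geq \sqrt{|S|-1}$ (which makes the last summand $\leq \sqrt{|S|-1}$) the expansion gives $\|X\|^2 \leq \gamma(S)^2 + 3(|S|-1)$. Since $X^2 \leq \|X\|^2 \cdot 1$ in $L(\Gamma)$, this in particular bounds $\varphi^k(X^2) \leq \|X\|^2$ and hence the variance of $X$ under $\varphi^k$.

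With the moments and the norm bound in hand, I would take the spectral projection $P := \mathbf{1}_{[t, +\infty)}(X)$ for a threshold $t \in (0, \mu)$ to be optimized. Chebyshev's inequality applied to $\delta_e$ (centered at $0$) yields $\delta_e(P) \leq |S|/t^2$, while Chebyshev under $\varphi^k$ (centered at $\mu$) yields $\varphi^k(P) \geq 1 - (\gamma(S)^2 + 3(|S|-1))/(\mu - t)^2$. Since $\|\varphi^k - \delta_e\| \geq \varphi^k(P) - \delta_e(P)$, substituting $k = \ln(|S|-1)/(2\varphi^{+}(1)) - c$ so that $\mu^2 \geq |S|^2 e^{2c\varphi^{+}(1)}/(|S|-1)$, and choosing $t$ proportional to $\mu$, the two tail terms combine into an expression of the announced form $4(2 + 3\gamma(S)^2/|S|)\, e^{-2\varphi^{+}(1) c}$.

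The hard part is not the principle, which is a standard second-moment argument on both sides of a spectral projection, but the careful bookkeeping of constants. One must optimize $t$ so that the $\delta_e$-tail $|S|/t^2$ and the $\varphi^k$-tail involving $\gamma(S)^2 + 3(|S|-1)$ balance correctly against the lower bound on $\mu$, possibly using the sharper $(\gamma + (|S|-1)/\gamma)^2$ rather than $\gamma^2 + 3(|S|-1)$ to reach the exact coefficients $2$ and $3$ in the statement. The essential new input remains the cogrowth--spectral radius inequality, which is what makes the factor $\gamma(S)^2/|S|$ appear in the bound.
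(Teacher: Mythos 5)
Your proposal is correct and follows essentially the same route as the paper: test against a spectral projection of $\chi_1 = \sum_{g\in S} g$, lower-bound the mean under $\varphi^k$ via the positivity hypothesis, bound $\|\chi_1\|_\infty$ (hence the variance) by the Grigorchuk--Cohen cogrowth formula, and apply Chebyshev under both $\varphi^k$ and $\delta_e$ before using $\gamma(S)\geqslant\sqrt{\vert S\vert}$ to absorb the constants. The only cosmetic difference is your one-sided threshold projection versus the paper's two-sided event $\vert\chi_1\vert\leqslant\eta$ with $\eta = \vert S\vert e^{-k\varphi^+(1)}/2$, which is exactly the choice of $t$ you leave to be optimized.
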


\begin{proof}
This is where the total variation distance proves useful. In fact, it is easy to prove (see for instance \cite[Lem 2.6]{freslon2017cutoff}) that this norm is equal to the supremum of $\vert\varphi^{k}(p) - \delta_{e}(p)\vert$ over all projections $p\in L(\Gamma)$. Thus, the lower bound will be obtained by evaluating at a suitably chosen spectral projection $p$ of $\chi_{1}$. Choosing the projection first requires some estimates.

According to \cite[Thm 3]{cohen1982cogrowth}, $\|\chi_{1}\|_{\infty} = \gamma(S) + (\vert S\vert - 1)/\gamma(S)$. This only makes sense for groups which are not free on $S$, but the formula can be extended to the latter case by setting $\gamma(S) = \sqrt{\vert S\vert - 1}$ instead of $1$. Since $\chi_{1}$ is self-adjoint, it follows from this that
\begin{equation*}
\var_{\varphi}(\chi_{1})\leqslant \left(\gamma(S) + \frac{\vert S\vert}{\gamma(S)}\right)^{2}.
\end{equation*}
On the other hand, with our assumption we have the estimate
\begin{equation*}
\varphi(\chi_{1}) = \sum_{g\in S} \varphi(g) \geqslant \vert S\vert e^{-k\varphi^{+}(1)}.
\end{equation*}
The lower bound can be obtained from this using the same arguments as in \cite[Prop 3.15]{freslon2017cutoff}. Namely, set $\eta =  \vert S\vert e^{-k\varphi^{+}(1)}/2$ and let us view $\chi_{1}$ as a classical random variable in the algebra $L^{\infty}(\mathrm{Sp}(\chi_{1}))$ which it generates inside $L(\Gamma)$. On the one hand, if $\vert\chi_{1}\vert\leqslant \eta$ then $\vert\chi_{1} - \varphi(\chi_{1})\vert\geqslant \eta/2$ and the probability, with respect to $\varphi$, of this event can be bounded by the Chebyshev inequality. On the other hand, the probability, with respect to $\delta_{e}$, that $\vert\chi_{1}\vert\leqslant \eta/2$ is one minus the probability that $\vert\chi_{1}\vert > \eta/2$ and the latter can also be bounded using the Chebyshev inequality. Putting things together yields
\begin{align*}
\|\varphi^{\ast k} - h\| & \geqslant 1 - \frac{4}{\vert S\vert^{2}}\left(\gamma(S) + \frac{\vert S\vert}{\gamma(S)}\right)^{2}e^{2k\varphi^{+}(1)} - \frac{4}{\vert S\vert}e^{2k\varphi^{+}(1)} \\
& \geqslant 1 - 4\left(\frac{\gamma(S)}{\sqrt{\vert S\vert}} + \frac{\sqrt{\vert S\vert}}{\gamma(S)}\right)^{2}e^{-2\varphi^{+}(1)c} - 4e^{-2\varphi^{+}(1)c} \\
\end{align*}
and the result follows from the fact that $\gamma(S)\geqslant \sqrt{\vert S\vert}$ (see \cite[Thm 1]{cohen1982cogrowth}).
\end{proof}

The problem in the previous statement is that the lower bound involves a term depending on the size of $S$. In order to convert this into a cut-off statement, we need to find families of groups $\Gamma_{N}$ with generating sets $S_{N}$ such that we have a uniform bound on $\gamma(S_{N})^{2}/\vert S_{N}\vert$. In a sense, this means that they are close to free groups. Note however that it is shown in \cite{ollivier2005cogrowth} that for any $\epsilon > 0$, groups with a finite generating set $S$ such that $\gamma(S)\leqslant \sqrt{\vert S\vert} + \epsilon$ are generic in the sense of random groups.

\section{Examples}\label{sec:cocycle}

In this section we will give explicit examples of cut-off phenomena using states coming from $1$-cocycles. An elementary calculation (see for instance \cite[Ex C.2.2.ii]{bekka2008kazhdan}) shows that for any group $\Gamma$, the map $g\mapsto e^{-\|b(g)\|^{2}}$ is positive definite as soon as $b$ is a $1$-cocycle in the following sense :

\begin{de}
Let $\Gamma$ be a discrete group and let $\pi : \Gamma\to B(H)$ be a unitary representation. A \emph{$1$-cocycle associated to $\pi$} is a map $b : \Gamma \to H$ such that for any $g, h\in \Gamma$,
\begin{equation*}
b(gh) = \pi(g)b(h) + b(g).
\end{equation*}
The set of $1$-cocycles associated to $\pi$ is a vector space denoted by $Z^{1}(\Gamma, \pi)$.
\end{de}

Let us denote by $\varphi_{b}$ the state on $\C[\Gamma]$ associated to $b$, i.e.
\begin{equation*}
\varphi_{b} : g\mapsto e^{-\|b(g)\|^{2}}.
\end{equation*}
The exponential decay property translates in this setting into a growth condition for $b$, namel

\begin{de}
Let $\Gamma$ be a discrete group with a finite symmetric generating set $S$. A cocycle $b : \Gamma\to \C$ is said to have \emph{radical growth} if there exists $\alpha > 0$ such that for all $g$ in $\Gamma$,
\begin{equation*}
\|b(g)\|\geqslant \alpha\sqrt{\vert g\vert_{S}}.
\end{equation*}
Having radical growth is a property which does not depend on the choice of a generating set $S$.
\end{de}

\begin{rem}
The existence of a cocycle with radical growth implies that $\Gamma$ has the Haagerup property and that the equivariant Hilbert space compression of $\Gamma$ is at least $1/2$. Moreover, if it is strictly greater than $1/2$, then by \cite[Thm 5.3]{guentner2004exactness} the group $\Gamma$ is amenable. This means that in order to get non-amenable examples, we will need groups with equivariant Hilbert space compression exactly $1/2$.
\end{rem}

\subsection{Sharp cut-off}

We can now give some examples of positive definite functions yielding a sharp cut-off. For this, we need a family $(\Gamma_{N})_{N\in \N}$ of discrete groups together with $1$-cocycles $b_{N}$ and symmetric generating sets $S_{N}$ such that
\begin{itemize}
\item $b_{N}$ has radical growth,
\item $\varphi_{b_{N}}^{+}(1) = \alpha(S_{N})$,
\item $\gamma(S_{N})/\sqrt{\vert S_{N}\vert}$ is uniformly bounded.
\end{itemize}
The simplest instance when the first two conditions are met is when $\|b(g)\|^{2} = \vert g\vert_{S}$, which is possible if and only if (see for instance \cite[Thm C.2.3]{bekka2008kazhdan}) the word length $\vert\cdot\vert_{S}$ is \emph{conditionally negative definite} \cite[Sec 2.10]{bekka2008kazhdan}. But even in that case, there is no reason why the third condition should be met. It turns out however that we can go round it. Note that if quantities are to depend on the size of $S_{N}$, then we will have to assume that the generating set is minimal to get optimal constants.

\begin{thm}\label{thm:negativelength}
Let $(\Gamma_{N}, S_{N})$ be a family of discrete groups with minimal symmetric generating sets $S_{N}$ such that the corresponding length function is conditionally negative definite and let $b_{N}$ be a cocycle such that $\|b_{N}(\cdot)\|^{2} = \vert \cdot\vert_{S_{N}}$. Then, the sequence $(\varphi_{b_{N}}^{k})_{k\in \N}$ has a cut-off at $\ln(\vert S_{N}\vert - 1)/2$ steps.
\end{thm}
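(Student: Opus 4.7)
Since $\|b_{N}(g)\|^{2} = \vert g\vert_{S_{N}}$, the positive definite function $\varphi_{N} := \varphi_{b_{N}}$ is given by $\varphi_{N}(g) = e^{-\vert g\vert_{S_{N}}}$, which is strict, real-valued and positive and has exponential decay with $\alpha(S_{N}) = \varphi_{N}^{+}(1) = 1$. The upper bound is then an immediate application of Proposition \ref{prop:upperbound}: for $k = \ln(\vert S_{N}\vert-1)/2 + c$,
\[
\|\varphi_{N}^{k} - \delta_{e}\| \leqslant \frac{e^{-c}}{\sqrt{2 - 2e^{-c}}},
\]
which is of the form required by the cut-off definition and uniform in $N$.

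The lower bound at $k = \ln(\vert S_{N}\vert - 1)/2 - c$ is the main content. A direct appeal to Proposition \ref{prop:lowerbound} fails because the resulting constant depends on $\gamma(S_{N})^{2}/\vert S_{N}\vert$, which is not controlled by our hypotheses. The plan is to revisit the Chebyshev argument of that proposition and to replace the general bound $\var_{\varphi_{N}^{k}}(\chi_{1}) \leqslant \|\chi_{1}\|_{\infty}^{2}$, which is responsible for the cogrowth dependency through Kesten's formula, by a direct computation specific to our $\varphi_{N}$. Partitioning pairs $(g,h) \in S_{N} \times S_{N}$ according to the value of $\vert gh\vert_{S_{N}} \in \{0, 1, 2\}$ and setting $N_{j} = \vert\{(g,h) : \vert gh\vert_{S_{N}} = j\}\vert$, I will obtain
\[
\varphi_{N}^{k}(\chi_{1}^{2}) = \vert S_{N}\vert + N_{1} e^{-k} + N_{2} e^{-2k},
\]
and since $N_{0}+N_{1}+N_{2} = \vert S_{N}\vert^{2}$ and $\varphi_{N}^{k}(\chi_{1}) = \vert S_{N}\vert e^{-k}$, a one-line computation then gives
\[
\var_{\varphi_{N}^{k}}(\chi_{1}) = \vert S_{N}\vert + N_{1}\, e^{-k} - (\vert S_{N}\vert + N_{1})\, e^{-2k} \leqslant \vert S_{N}\vert + N_{1}\, e^{-k}.
\]

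The crucial combinatorial step is then to bound $N_{1} \leqslant \vert S_{N}\vert$ using the minimality of $S_{N}$. If $(g,h) \in S_{N}^{2}$ satisfies $u := gh \in S_{N} \setminus \{e\}$, the equality $u = g \cdot h$ allows one to reconstruct both $u$ and $u^{-1}$ from $S_{N} \setminus \{u, u^{-1}\}$ as soon as $u \notin \{g^{\pm 1}, h^{\pm 1}\}$, which is excluded by minimality. The subcases $u = g$ and $u = h$ are impossible, and each of $u = g^{-1}$ or $u = h^{-1}$, after a second application of minimality (to exclude $g^{\pm 2}$ from $S_{N}$ as an independent generator), forces $g = h$ with $g^{3} = e$. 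Each pair contributing to $N_{1}$ is therefore of the form $(g, g)$ with $g$ of order $3$, giving $N_{1} \leqslant \vert S_{N}\vert$ and hence $\var_{\varphi_{N}^{k}}(\chi_{1}) \leqslant \vert S_{N}\vert(1 + e^{-k})$. Injecting this variance into the Chebyshev argument of Proposition \ref{prop:lowerbound} with $\eta = \vert S_{N}\vert e^{-k}/2$ produces a lower bound of the form
\[
\|\varphi_{N}^{k} - \delta_{e}\| \geqslant 1 - C_{1} e^{-2c} - C_{2} e^{-c}/\sqrt{\vert S_{N}\vert - 1}
\]
for absolute constants $C_{1}, C_{2}$, which fits the cut-off definition uniformly in $N$ as soon as $\vert S_{N}\vert \geqslant 2$.

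The main obstacle is precisely the combinatorial bound $N_{1} \leqslant \vert S_{N}\vert$: the trivial estimate $N_{1} \leqslant \vert S_{N}\vert^{2}$ would make the variance term in the Chebyshev estimate of order $\sqrt{\vert S_{N}\vert}\, e^{c}$, wiping out any uniform control at the threshold $\ln(\vert S_{N}\vert - 1)/2$. What saves the argument is the observation that minimality of the generating set restricts the contributing pairs to diagonal pairs $(g,g)$ on order-$3$ generators; this is what allows one to bypass the cogrowth dependency of Proposition \ref{prop:lowerbound} and deliver the cut-off at the announced threshold.
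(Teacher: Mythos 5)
Your proof is correct and follows essentially the same route as the paper: the upper bound is Proposition \ref{prop:upperbound} with $\alpha(S_{N})=1$, and the lower bound reruns the Chebyshev argument of Proposition \ref{prop:lowerbound} at $k=\ln(\vert S_{N}\vert-1)/2-c$ with a variance bound for $\chi_{1}$ under $\varphi_{N}^{k}$ extracted from minimality of $S_{N}$ instead of from cogrowth. Your combinatorial step is in fact slightly more careful than the paper's, which asserts that a product of two generators has length two unless they are mutually inverse (i.e.\ $N_{1}=0$) and thereby overlooks the order-three exception $g\cdot g=g^{-1}$ that you correctly isolate and control by $N_{1}\leqslant\vert S_{N}\vert$, yielding the same uniform lower bound.
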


\begin{proof}
The upper bound comes directly from Proposition \ref{prop:upperbound} with $\alpha(S) = 1$. As for the lower bound, we will prove that for any $c>0$ and $k = \ln(\vert S_{N} - 1\vert)/2 - c$,
\begin{equation*}
\|\varphi_{b_{N}}^{k} - \delta_{e}\| \geqslant 1 - 8e^{-2c}.
\end{equation*}
As in Proposition \ref{prop:lowerbound}, we will apply the Chebyshev inequality to a spectral projection of $\chi_{1}$. Its expectation satisfies
\begin{equation*}
\varphi_{b_{N}}^{k}(\chi_{1}) = \vert S_{N}\vert e^{-k}.
\end{equation*}
For the variance we will improve the upper bound of Proposition \ref{prop:lowerbound}. Note that because a product of two elements of $S_{N}$ has length two unless the elements are inverse to one another (by minimality of $S_{N}$), we have $\chi_{1}^{2} = \vert S_{N}\vert e + (\vert S_{N}\vert^{2} - \vert S_{N}\vert)\chi_{2}$. Thus,
\begin{align*}
\var_{\varphi_{b_{N}}^{k}}(\chi_{1}) & \leqslant \vert S_{N}\vert + (\vert S_{N}\vert^{2} - \vert S_{N}\vert)e^{-2k} - \vert S_{N}\vert^{2}e^{-2k} \\
& = \vert S_{N}\vert(1-e^{-2k}).
\end{align*}
Since $\delta_{e}(\chi_{1}) = 0$ and $\delta_{e}(\chi_{1}^{2}) = \vert S_{N}\vert$, the standard strategy yields
\begin{align*}
\|\varphi^{k} - \delta_{e}\| & \geqslant 1 - \frac{4\var_{h}(\chi_{1})}{\varphi_{b_{N}}^{k}(\chi_{1})^{2}} - \frac{4\var_{\varphi_{b_{N}}^{k}}(\chi_{1})}{\varphi_{b_{N}}^{k}(\chi_{1})^{2}} \\
& = 1 -  4\frac{e^{2k}}{\vert S_{N}\vert} - 4\frac{e^{2k} - 1}{\vert S_{N}\vert} \\
& \geqslant 1 - 8e^{-2c}.
\end{align*}
\end{proof}

\begin{rem}
The argument above is in fact more general : for any group $\Gamma$ with a minimal symmetric generating set $S$ and a positive definite function $\varphi$, if $\varphi^{-}(2)\geqslant 2\varphi^{+}(1)$ then for any $c > 0$ and $k = \ln(\vert S\vert - 1)/2\varphi^{+}(1) - c$,
\begin{equation*}
\|\varphi_{b_{N}}^{k} - h\| \geqslant 1 - 8e^{-2\varphi^{+}(1)c}.
\end{equation*}
\end{rem}

Here are two families of examples one can build from this :
\begin{itemize}
\item For $N\in \N$, let $\Gamma_{N} = \F_{N}$ be the free group on $N$ generators and take for $S_{N}$ the canonical generators and their inverses. By \cite[Lem 1.2]{haagerup1978example}, the associated word length is conditionally negative definite so that for the corresponding state, Theorem \ref{thm:negativelength} yields a sharp cut-off at $\ln\left(\sqrt{2N-1}\right)$ steps. Note that in that case, the cut-off parameter is indeed equal to the exponential growth rate $\omega(S_{N})$. Moreover, free groups have the Property of Rapid Decay by \cite[Lem 1.4]{haagerup1978example} so that by Proposition \ref{prop:nodensity}, the cut-off happens exactly when the state extends to a bounded normal functional on the von Neumann algebra.
\item Let $S_{N}$ be a set with $N$ elements and let $W_{N}$ be a Coxeter matrix of size $N\times N$. If the corresponding Coxeter group $\Gamma_{N}$ is infinite, it follows from \cite{bozejko1988infinite} that the word length associated to $S_{N}$ is conditionally negative definite. Then, by Theorem \ref{thm:negativelength} there is a sharp cut-off at $\ln(\sqrt{N-1})$ steps. Infinite Coxeter groups also satisfy the Property of Rapid Decay by \cite[Cor 1]{fendler2003simplicity}.
\end{itemize}

\subsection{Free products}

The examples of $\F_{N}$ and $\Z_{2}^{\ast N}$ (this is a Coxeter group) suggest to consider more general free products to build sequences of groups exhibiting a cut-off phenomenon. Indeed, there is a natural way to build a free product of two positive definite functions and the growth of the result is easily controlled. Given positive definite functions $\varphi_{1}$ and $\varphi_{2}$ on groups $\Lambda_{1}$ and $\Lambda_{2}$, any element $g\in \Lambda_{1}\ast\Lambda_{2}$ can be uniquely written as an alternating product $g = h_{1}k_{1}h_{2}k_{2}\cdots h_{n}k_{n}$ with $h_{i}\in \Lambda_{1}$ and $k_{i}\in \Lambda_{2}$ and setting
\begin{equation*}
\varphi(g) = \varphi_{1}(h_{1})\varphi_{2}(k_{1})\cdots \varphi_{1}(h_{n})\varphi_{2}(k_{n})
\end{equation*}
defines a positive definite function (see for instance \cite[Prop 6.2.3]{cherix2001groups}). However, there is to our knowledge no general way of bounding the co-growth of a free product, so that we need another way to bound the norm of $\chi_{1}$ to obtain a lower bound estimate. Let us gather all the sufficient conditions in a single proposition :

\begin{prop}\label{prop:freeproduct}
Let $(\Lambda_{i}, T_{i}, \psi_{i})_{i\in \N}$ be a sequence of discrete groups with a finite symmetric generating set $T_{i}$ and a positive definite function $\psi_{i}$ with exponential decay. Assume moreover that there exist constants $\alpha, \beta, \delta > 0$ such that
\begin{itemize}
\item The decay exponent of $\psi_{i}$ with respect to $T_{i}$ is bounded below by $\alpha$ for all $i$,
\item $\|\chi_{1}^{(i)}\|_{\infty}\leqslant \delta\sqrt{\vert T_{i}\vert}$ for all $i$, where $\chi_{1}^{(i)}$ is the sum of the generators  of $\Lambda_{i}$,
\item $\beta\geqslant \psi_{i}^{+}(1)$ for all $i$.
\end{itemize}
Then, setting $\Gamma_{N} = \Lambda_{1}\ast\cdots \ast\Lambda_{N}$, $S_{N} = T_{1}\sqcup\cdots\sqcup T_{N}$ and $\varphi_{N} = \psi_{1}\ast\cdots \ast\psi_{N}$, the sequence $(\varphi_{N}^{k})_{k\in \N}$ has a pre-cut-off in the window $[\ln(\sqrt{\vert S_{N}\vert})/\beta, [\ln(\sqrt{\vert S_{N}\vert})/\alpha]$.
\end{prop}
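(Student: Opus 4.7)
For the upper bound, I would first verify that $\varphi_{N}$ inherits exponential decay with rate at least $\alpha$ on $(\Gamma_{N}, S_{N})$. Any $g \in \Gamma_{N}$ admits a unique reduced alternating expression $g = x_{1}\cdots x_{n}$ with $x_{j}\in \Lambda_{i_{j}}\setminus\{e\}$ and $i_{j}\neq i_{j+1}$, and the very definition of the free product of positive definite functions, together with the additivity $|g|_{S_{N}} = \sum_{j}|x_{j}|_{T_{i_{j}}}$ of the word length, yields
$$|\varphi_{N}(g)| = \prod_{j}|\psi_{i_{j}}(x_{j})| \leqslant \prod_{j}e^{-\alpha|x_{j}|_{T_{i_{j}}}} = e^{-\alpha|g|_{S_{N}}},$$
so $\alpha(S_{N})\geqslant \alpha$. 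Since $\ln(\sqrt{|S_{N}|})/\alpha \geqslant \ln(|S_{N}|-1)/(2\alpha)$, Proposition \ref{prop:upperbound} applied at $k = \ln(\sqrt{|S_{N}|})/\alpha + c$ immediately produces $\|\varphi_{N}^{k} - \delta_{e}\| = O(e^{-\alpha c})$, giving the upper end of the window.

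For the lower bound I would follow the Chebyshev strategy of Proposition \ref{prop:lowerbound}, evaluating the total variation distance on a spectral projection of $\chi_{1} = \sum_{g\in S_{N}}g = \sum_{i=1}^{N}\chi_{1}^{(i)}$ onto $\{|\chi_{1}|\leqslant \eta\}$ with threshold $\eta = |S_{N}|e^{-k\beta}/2$. The hypothesis $\psi_{i}^{+}(1)\leqslant \beta$ (combined with positivity of $\psi_{i}$ on $T_{i}$, as is automatic in the cocycle-based examples of interest) gives
$$\varphi_{N}^{k}(\chi_{1}) = \sum_{i=1}^{N}\sum_{g\in T_{i}}\psi_{i}(g)^{k} \geqslant |S_{N}|e^{-k\beta} = 2\eta,$$
while $\delta_{e}(\chi_{1}) = 0$ and $\delta_{e}(\chi_{1}^{2}) = |S_{N}|$. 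The new ingredient, replacing the cogrowth estimate of Proposition \ref{prop:lowerbound}, is a bound on $\|\chi_{1}\|_{\infty}$ coming from the free product structure of $L(\Gamma_{N}) = \ast_{i}L(\Lambda_{i})$: the operators $\chi_{1}^{(i)}$ are free, self-adjoint and $\delta_{e}$-centered, and the hypothesis $\|\chi_{1}^{(i)}\|_{\infty}\leqslant \delta\sqrt{|T_{i}|} = \delta\|\chi_{1}^{(i)}\|_{2}$ is exactly the kind of norm control required to apply a free Khintchine-type inequality and conclude $\|\chi_{1}\|_{\infty}\leqslant C(\delta)\sqrt{|S_{N}|}$ for some constant depending only on $\delta$. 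Consequently $\var_{\varphi_{N}^{k}}(\chi_{1})\leqslant C(\delta)^{2}|S_{N}|$, and choosing $k = \ln(\sqrt{|S_{N}|})/\beta - c$ makes both $|S_{N}|/\eta^{2}$ and $C(\delta)^{2}|S_{N}|/\eta^{2}$ of order $e^{-2\beta c}$; the usual Chebyshev computation then yields $\|\varphi_{N}^{k} - \delta_{e}\|\geqslant 1 - 4(1+C(\delta)^{2})e^{-2\beta c}$.

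The main obstacle is the free-probabilistic norm bound $\|\sum_{i}\chi_{1}^{(i)}\|_{\infty}\leqslant C(\delta)\sqrt{|S_{N}|}$ with constant uniform in $N$. The regime $\|\chi_{1}^{(i)}\|_{\infty}/\|\chi_{1}^{(i)}\|_{2}\leqslant \delta$ is precisely the one in which the free central limit theorem suggests $\|\sum_{i}\chi_{1}^{(i)}\|_{\infty}$ should be close to $2\bigl(\sum_{i}\|\chi_{1}^{(i)}\|_{2}^{2}\bigr)^{1/2} = 2\sqrt{|S_{N}|}$, so the task reduces to invoking a suitable non-asymptotic version of such an inequality (Voiculescu, Haagerup--Pisier, Lehner, \ldots) with the right dependence on $\delta$. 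Once this step is in place, everything else is routine Chebyshev bookkeeping.
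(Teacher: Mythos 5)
Your upper bound is exactly the paper's: multiplicativity of $\varphi_{N}$ over the alternating decomposition plus additivity of the word length gives $\alpha(S_{N})\geqslant\alpha$, and Proposition \ref{prop:upperbound} does the rest. For the lower bound you set up the same Chebyshev argument on a spectral projection of $\chi_{1}$, with the same expectation estimate, but you route the variance bound through an operator-norm estimate $\|\chi_{1}\|_{\infty}\leqslant C(\delta)\sqrt{\vert S_{N}\vert}$, which you correctly single out as the one non-routine step and then leave to an unspecified free Khintchine-type inequality. That step is where you diverge from the paper, and it is not needed. The paper's observation is that $\varphi_{N}$ --- and hence every power $\varphi_{N}^{k}=\psi_{1}^{k}\ast\cdots\ast\psi_{N}^{k}$ --- is a free product state, so the $\chi_{1}^{(i)}$ are freely independent in $(L(\Gamma_{N}),\varphi_{N}^{k})$; in particular the mixed covariances vanish (this uses only the factorization $\varphi_{N}(xy)=\varphi_{N}(x)\varphi_{N}(y)$ for $x,y$ in distinct factors, immediate from the defining formula since $xy$ is then alternating), whence
\begin{equation*}
\var_{\varphi_{N}^{k}}(\chi_{1})=\sum_{i=1}^{N}\var_{\psi_{i}^{k}}(\chi_{1}^{(i)})\leqslant\sum_{i=1}^{N}\|\chi_{1}^{(i)}\|_{\infty}^{2}\leqslant\delta^{2}\vert S_{N}\vert,
\end{equation*}
which is all the Chebyshev computation requires. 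Your detour does work in principle --- inequalities of the form $\|\sum_{i}a_{i}\|_{\infty}\leqslant\max_{i}\|a_{i}\|_{\infty}+2(\sum_{i}\|a_{i}\|_{2}^{2})^{1/2}$ for free centered self-adjoint elements exist and would give $C(\delta)=2+\delta$ --- but it imports a genuinely harder theorem to prove what is merely a second-moment bound for one specific state, and as written your proof has an unfilled step (no precise statement or reference for the norm inequality, nor its dependence on $\delta$) exactly where the paper's argument is elementary. Once you either pin down that inequality or, better, replace it by additivity of variance under freeness, the rest is routine as you say. Your side remark that one needs $\psi_{i}\geqslant 0$ on $T_{i}$ for the expectation lower bound is correct; the paper makes the same implicit assumption, consistent with Proposition \ref{prop:lowerbound}.
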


\begin{proof}
Consider an element $g = g_{1}\cdots g_{n}\in \Gamma_{N}$ where $g_{j}\in \Lambda_{i_{j}}$ and $i_{j}\neq i_{j+1}$. By construction,
\begin{equation*}
\varphi_{N}(g) = \prod_{j=1}^{n}\psi_{i_{j}}(g_{j})\leqslant \prod_{j=1}^{n}e^{-\alpha_{i_{j}}\vert g_{j}\vert_{S_{i_{j}}}} \leqslant \exp\left(-\alpha\displaystyle\sum_{j=1}^{n}\vert g_{j}\vert_{S_{i_{j}}}\right) = e^{-\alpha\vert g\vert}
\end{equation*}
so that we have a uniform control on the exponential growth rate. Moreover, because elements of length one are exactly generators of the initial groups,
\begin{equation*}
\varphi_{N}^{+}(1) = \sup_{1\leqslant i\leqslant N}\psi_{i}^{+}(1)\leqslant \beta.
\end{equation*}
To be able to conclude we now need a lower bound which can easily be obtained by a free probability argument. More precisely, we can consider the elements $\chi_{1}^{(i)}$ as noncommutative random variables in the noncommutative probability space $(L(\Gamma_{N}), \varphi_{N})$. Because $\varphi_{N}$ is a free product state, the aforementioned variables are freely independent with respect to it. Thus, the variance of their sum is the sum of their variances and
\begin{equation*}
\var_{\varphi_{N}}(\chi_{1}) = \sum_{i=1}^{N}\var_{\psi_{i}}(\chi_{1}^{(i)}) \leqslant \delta^{2}\sum_{i=1}^{N}\vert T_{i}\vert = \delta^{2}\vert S_{N}\vert.
\end{equation*}
The result now follows as in Proposition \ref{prop:lowerbound}.
\end{proof}

The simplest instance where the hypothesis of this proposition are satisfied is when the sequence is constant, i.e.~we consider $\varphi^{\ast N}$ on $\Gamma^{\ast N}$ with generating set $S^{\sqcup N}$. All we need is then a positive definite function with exponential decay, or a cocycle with radical growth. Examples of such cocycles will be given in the next subsection.

\subsection{Geometric cocycles}

We will now give examples of $1$-cocycles with radical growth on discrete groups obtained by geometric means. To this purpose, let us say that a metric space $(X, d)$ has an \emph{equivariant Hilbert embedding with radical growth} if there exists
\begin{itemize}
\item A Hilbert space $H$ together with an affine isometric action of the isometry group of $X$,
\item An equivariant map $f : X\to H$ such that
\begin{equation*}
C^{-1}\sqrt{d(x, y)} - C'\leqslant \|f(x) - f(y)\|\leqslant Cd(x, y) + C'
\end{equation*}
for some constants $C > 0$ and $C'\geqslant 0$.
\end{itemize}
Recall that a group $\Gamma$ is said to act \emph{geometrically} on a metric space $(X, d)$ if it acts properly and cocompactly by isometries. Here is a well-known recipe to produce $1$-cocycles with radical growth :

\begin{prop}\label{prop:geometriccocycle}
Let $\Gamma$ be a finitely generated discrete \emph{torsion-free} group acting geometrically on a geodesic metric space $(X, d)$ having an equivariant Hilbert embedding with radical growth. Then, $\Gamma$ has a $1$-cocycle with radical growth.
\end{prop}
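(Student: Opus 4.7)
The plan is to use the Milnor--\v{S}varc lemma to transfer the geometry of $X$ to the Cayley graph of $\Gamma$, and then to read off the desired cocycle from the equivariant Hilbert embedding $f$. Fix a basepoint $x_{0}\in X$ and write the given affine isometric action of $\mathrm{Isom}(X)$, restricted to $\Gamma$, as $\alpha(g)v = \pi(g)v + c(g)$, where $\pi : \Gamma \to \mathcal{U}(H)$ is its unitary linear part. Define $b : \Gamma \to H$ by $b(g) = f(g\cdot x_{0}) - f(x_{0})$. A direct computation using the equivariance $f(g\cdot x) = \alpha(g)f(x)$ shows that $b$ is a $1$-cocycle for $\pi$.

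Since $\Gamma$ acts properly and cocompactly by isometries on the geodesic space $(X,d)$, the Milnor--\v{S}varc lemma provides constants $A\geqslant 1$ and $B\geqslant 0$ with
\begin{equation*}
A^{-1}\vert g\vert_{S} - B \leqslant d(g\cdot x_{0}, x_{0}) \leqslant A\vert g\vert_{S} + B
\end{equation*}
for every $g\in \Gamma$. Feeding the left-hand inequality into the lower bound of the Hilbert embedding yields
\begin{equation*}
\|b(g)\| = \|f(g\cdot x_{0}) - f(x_{0})\| \geqslant C^{-1}\sqrt{A^{-1}\vert g\vert_{S} - B} - C',
\end{equation*}
so there exist $\alpha' > 0$ and $N_{0}\in \N$ such that $\|b(g)\| \geqslant \alpha'\sqrt{\vert g\vert_{S}}$ as soon as $\vert g\vert_{S}\geqslant N_{0}$. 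This takes care of the asymptotic half of radical growth.

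The main obstacle is the short-range behaviour: the embedding bound becomes vacuous for small values of $d(g\cdot x_{0}, x_{0})$, so one must rule out $b(g) = 0$ for the (finitely many) non-trivial $g$ of word length at most $N_{0}$. This is precisely where the torsion-free hypothesis intervenes. If $b(g) = 0$, then $\alpha(g)$ fixes $f(x_{0})$, so iterating gives $f(g^{n}\cdot x_{0}) = \alpha(g^{n})f(x_{0}) = f(x_{0})$ for every $n\in \N$. The embedding lower bound forces $d(g^{n}\cdot x_{0}, x_{0})\leqslant (CC')^{2}$ uniformly in $n$, and Milnor--\v{S}varc converts this into a uniform bound on $\vert g^{n}\vert_{S}$; since balls in the finitely generated group $\Gamma$ are finite, the cyclic subgroup $\langle g\rangle$ must be finite, contradicting torsion-freeness. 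Setting
\begin{equation*}
\alpha'' = \min\left\{\frac{\|b(g)\|}{\sqrt{\vert g\vert_{S}}} \, : \, 1\leqslant \vert g\vert_{S}\leqslant N_{0}\right\} > 0
\end{equation*}
and $\alpha = \min(\alpha', \alpha'')$ then delivers $\|b(g)\|\geqslant \alpha\sqrt{\vert g\vert_{S}}$ for every $g\in \Gamma$, which is the required radical growth of $b$.
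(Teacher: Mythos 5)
Your proof is correct and follows essentially the same route as the paper: the \v{S}varc--Milnor lemma combined with the equivariant embedding produces a cocycle satisfying $\|b(g)\|^{2}\geqslant C_{0}\vert g\vert_{S}-C_{1}$, and torsion-freeness is then used to rule out $b(g)=0$ for the finitely many remaining short elements. The only cosmetic difference is that you argue directly that each $g$ with $b(g)=0$ generates a finite cyclic subgroup, whereas the paper observes that the whole set $\{g : b(g)=0\}$ is a finite subgroup and invokes Remark \ref{rem:radicalgrowth}; the content is identical.
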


\begin{proof}
By the \v{S}varc-Milnor Lemma (see for instance \cite[Thm 23]{de2000topics}), $\Gamma$ is quasi-isometric to $(X, d)$ so that composing with the equivariant embedding of $X$ we get a $1$-cocycle $b$ on $\Gamma$ satisfying
\begin{equation}\label{eq:almostradicalgrowth}
\|b(g)\|^{2} \geqslant C_{0}\vert g\vert_{S} - C_{1}
\end{equation}
for all $g\in \Gamma$, with constants $C_{0} > 0$ and $C_{1}\geqslant 0$. Set $\varphi_{b}(g) = e^{-\|b(g)\|^{2}}$ and
\begin{equation*}
\Lambda = \{g\in \Gamma \mid b(g) = 0\} = \{g\in \Gamma \mid \vert\varphi_{b}(g)\vert = 1\}.
\end{equation*}
It follows from the proof of Proposition \ref{prop:convergencecondition} that $\Lambda$ is a subgroup. Moreover, because of \eqref{eq:almostradicalgrowth}, $\Lambda$ is finite, hence by torsion-freeness it is the trivial subgroup. Thus, we can conclude by Remark \ref{rem:radicalgrowth} that $\varphi_{b}$ is strict, i.e.~$b$ has radical growth.
\end{proof}

There are many examples of groups and metric spaces satisfying the hypothesis above. This includes trees (see \cite[Sec 2.3]{bekka2008kazhdan}) or even wall spaces (see for instance the comments after \cite[Def 5.1]{shalom2000rigidity}), real or complex hyperbolic spaces $\mathbb{H}^{n}_{\R}$, $\mathbb{H}^{n}_{\C}$ (see \cite[Sec 2.6]{bekka2008kazhdan}). One can also consider product actions on direct products of those spaces. As an illustration, it is shown in \cite[Subsec 6.4]{dreesen2011equivariant} that Baumslag-Solitar groups $BS(p, q)$ with $p, q > 1$ act geometrically on the product of the corresponding Bass-Serre tree and the real hyperbolic plane. Moreover, we have the following "permanence properties" for the existence of a $1$-cocycle with radical growth :
\begin{itemize}
\item If $\Gamma_{1}$ and $\Gamma_{2}$ have a $1$-cocycle with radical growth and $\Lambda$ is a common finite subgroup, then the proof of \cite[Thm 4.7]{dreesen2011hilbert} shows that the amalgamated free product $\Gamma_{1}\ast_{\Lambda}\Gamma_{2}$ has a $1$-cocycle with radical growth,
\item If $\Gamma$ is a finitely generated group together with a subgroup $\Lambda$ and a monomorphism $\theta : \Lambda\to\Gamma$ such that $\Lambda\cup\theta(\Lambda)$ generates a finite subgroup of $\Gamma$, then any $1$-cocycle with radical growth on $\Gamma$ yields a $1$-cocycle with radical growth on $\HNN(\Gamma, \Lambda, \theta)$ by the proof of \cite[Thm 4.9]{dreesen2011hilbert}.
\end{itemize}
This gives a wealth of examples, including for instance any group built from Baumslag-Solitar groups, free groups, surface groups and infinite Coxeter groups that one can iterate.

\section{Absence of cut-off}\label{sec:absence}

In this final section we will give a family of examples where there is no cut-off phenomenon, in the sense that for $N$ large enough, exponential convergence occurs from the first step on. This involves states not coming from cocycles as before. The general form of a state $\varphi$ on a C*-algebra $A$ is given by the GNS construction (see \cite[Thm C.1.4]{bekka2008kazhdan}), which yields a Hilbert space $H$, a unitary representation $\pi : A\to B(H)$ and a unit vector $\xi\in H$ such that $\varphi(x) = \langle \pi(x)\xi, \xi\rangle$. If $A = C^{*}(\Gamma)$, representations are in one-to-one correspondence with representations of $\Gamma$. From now on we will denote by $\varphi_{\pi, \xi}$ the state $\varphi_{\pi, \xi} : g\mapsto \langle \pi(g)\xi, \xi\rangle$.

It is difficult to give a general criterion to know whether $\varphi_{\pi, \xi}$ will have exponential decay since this heavily depends on the representation $\pi$ and therefore on the structure of the group $\Gamma$. The simplest case is certainly when $\pi$ is the regular representation on $\ell^{2}(\Gamma)$. In that case, we write $\varphi_{\xi}$ for $\varphi_{\textrm{reg}, \xi}$. As a case study, we will focus on free groups and restrict our attention to a particular class of vectors :

\begin{de}
A vector $\xi\in \ell^{2}(\Gamma)$ is said to be \emph{radial} if it is of the form
\begin{equation*}
\xi = \sum_{i=0}^{+\infty}\lambda_{i}\chi_{i}.
\end{equation*}
\end{de}

We will prove that such a state never exhibits a cut-off phenomenon. For convenience, let us set $\eta_{i} = (\vert S\vert - 1)^{i/2}\lambda_{i}$, so that the assumption that $\xi\in \ell^{2}(\Gamma)$ is equivalent to $(\eta_{i})_{i\in \N}\in \ell^{2}(\N)$. Note moreover that $\|(\eta_{i})_{i\in \N}\|_{2}\leqslant \|\xi\|_{2} = 1$.

\begin{thm}\label{thm:radialvectors}
Let $\Gamma_{N}$ be the free group on $N$ generators with its canonical generating set $S_{N}$. If $\xi$ is a radial vector, then the state $\varphi_{\xi}$ has exponential decay for $N\geqslant 3$ but the sequence $(\varphi_{\xi}^{k})_{k\in \N}$ has no cut-off.
\end{thm}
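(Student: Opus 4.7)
My plan is to find a clean uniform bound on $\vert\varphi_\xi(g)\vert$ that depends only on $\vert g\vert_{S_N}$, and then combine it with Proposition~\ref{prop:upperbound} to handle both exponential decay and absence of cut-off simultaneously. Since $\xi = \sum_i \lambda_i \chi_i$ is radial, the value $\varphi_\xi(g) = \langle \lambda(g)\xi, \xi\rangle$ depends only on $k = \vert g\vert_{S_N}$; denote this common value $f(k)$. Expanding yields
\[
f(k) = \sum_{i, j \geq 0} \lambda_i \lambda_j \cdot \#\{h \in S(i) \mid gh \in S(j)\},
\]
and the inner count is explicit because the Cayley graph of $\F_N$ is a tree: writing $j = k + i - 2c$ where $c$ is the cancellation amount, we have $\#\{h\} = (2N-2)(2N-1)^{i-c-1}$ for $0 < c < \min(k,i)$, with obvious boundary formulas at $c \in \{0, i, k\}$. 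After substituting $\lambda_n = \eta_n (2N-1)^{-n/2}$ and collecting powers of $(2N-1)$, I obtain a factorisation
\[
f(k) = (2N-1)^{-k/2}\,A_k(\eta),
\]
where $A_k(\eta)$ is a finite linear combination of autocorrelations $\sum_i \eta_i\eta_{i+d}$ at various lags $d$ and of a convolution term $\sum_{i+j=k}\eta_i\eta_j$.

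Applying Cauchy--Schwarz to each piece gives the crude bound $\vert A_k(\eta)\vert \leq k + 5/2$. For $k \geq 3$ this is enough since $k + 5/2 \leq 2^k$; for the small cases $k = 1, 2$, a direct regrouping of the various $c$-contributions (producing $A_1(\eta) = 2\sum_i \eta_i \eta_{i+1}$ and $A_2(\eta) = \eta_1^2 + 2\sum_i\eta_i\eta_{i+2} + \frac{2N-2}{2N-1}\sum_{i\geq 2}\eta_i^2$) yields $\vert A_1\vert \leq 2$ and $\vert A_2\vert \leq 4$ directly. Together these produce the clean uniform bound
\[
\vert \varphi_\xi(g)\vert \leq \Bigl(\tfrac{2}{\sqrt{2N-1}}\Bigr)^{\vert g\vert_{S_N}} \qquad \text{for every } g \in \F_N.
\]
For $N \geq 3$ the ratio $2/\sqrt{2N-1}$ is strictly less than $1$, so this is exponential decay in the sense of the paper with rate $\alpha_N := \tfrac12 \ln((2N-1)/4) > 0$; strictness is automatic since the bound is strict at every $g \neq e$, or alternatively because $\ell^2(\F_N)$ has no unit eigenvector for $\lambda(g)$ with $g \neq e$ by an orbit argument. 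This settles the exponential decay assertion.

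For absence of cut-off, I feed $\alpha_N$ into Proposition~\ref{prop:upperbound}. The threshold
\[
\frac{\ln(\vert S_N\vert - 1)}{2\alpha_N} = \frac{\ln(2N-1)}{\ln(2N-1) - 2\ln 2}
\]
tends to $1$ as $N\to\infty$, and is in fact $< 2$ once $N \geq 9$. The proposition then yields an exponentially small estimate $\|\varphi_\xi^k - \delta_e\| \leq B\,e^{-\alpha_N(k-k_0)}$ valid at every $k \geq 2$ (where $k_0 < 2$ is the threshold), and the trivial inequality $\|\varphi_\xi - \delta_e\| \leq 1$ absorbs the remaining case $k = 1$, so that for $N$ large enough one has $\|\varphi_\xi^k - \delta_e\| \leq B'\,e^{-\lambda_N k}$ for every $k \geq 1$. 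This is incompatible with a pre-cut-off window $[k'(N), k(N)]$ having $k'(N) \to \infty$: the pre-cut-off definition would require $\|\varphi_\xi^{k'(N) - c} - \delta_e\| \geq 1 - B'e^{-\lambda' c}$ for every fixed $c > 0$ and $N$ large, whereas my upper bound forces $\|\varphi_\xi^{k'(N) - c} - \delta_e\| \to 0$ as $N \to \infty$ for every fixed $c$, giving the required contradiction.

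The main obstacle is obtaining the polynomial-free uniform bound $\vert\varphi_\xi(g)\vert \leq (2/\sqrt{2N-1})^{\vert g\vert}$. A naive Cauchy--Schwarz leaves the spurious linear prefactor $k + 5/2$, which originates in the sum over the cancellation parameter $c$ and exceeds $2^k$ at $k = 1, 2$. Eliminating this prefactor there requires the combinatorial regrouping of the contributions from different values of $c$ into a single autocorrelation before any inequality is applied; that bookkeeping is the delicate core of the argument. Everything downstream---strictness, the cut-off threshold computation via Proposition~\ref{prop:upperbound}, and the formal contradiction ruling out a window with $k'(N)\to\infty$---is then routine.
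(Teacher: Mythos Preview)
Your proposal is correct and follows essentially the same route as the paper: compute $\varphi_\xi(g)$ explicitly from the tree combinatorics, factor out $(2N-1)^{-\vert g\vert/2}$, bound the residual factor linearly in $\vert g\vert$ via Cauchy--Schwarz (the paper phrases this with shift operators), absorb that linear factor into the exponential, and then observe that the threshold in Proposition~\ref{prop:upperbound} tends to $1$. You are in fact a bit more careful than the paper---your interior count $(2N-2)(2N-1)^{i-c-1}$ is the correct one, and your explicit treatment of $\vert g\vert=1,2$ avoids the paper's implicit use of $\vert g\vert+1\leqslant e^{\vert g\vert/2}$, which fails there---but these are refinements of the same argument rather than a different strategy.
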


\begin{proof}
We have
\begin{equation*}
\varphi_{\xi}(g) = \langle \pi(g)\xi, \xi\rangle = \sum_{i, j=0}^{+\infty} \lambda_{i}\overline{\lambda}_{j}\langle g.\chi_{i}, \chi_{j}\rangle = \sum_{i, j=0}^{+\infty} \lambda_{i}\overline{\lambda}_{j}\vert\left(g.S(i)\right)\cap S(j)\vert.
\end{equation*}
The set appearing above is empty unless $j \leqslant i + \vert g\vert$. Because we are considering free groups, an element $h\in S(i)$ such that $gh\in S(j)$ must be of the form $g_{p}^{-1}\cdots g_{p-t+1}^{-1}w$ where $t = (\vert g\vert + i - j)/2$, $g = g_{1}\cdots g_{p}$ and $w\in S(i-t)$ does not start with $g_{p-t+1}^{-1}$. Thus, there are exactly $(\vert S_{N}\vert - 1)^{i-t}$ such elements and
\begin{align*}
\varphi_{\xi}(g) & = \sum_{i=0}^{+\infty}\sum_{t=0}^{\min(i, \vert g\vert)}\lambda_{i}\overline{\lambda}_{i+\vert g\vert - 2t}(\vert S_{N}\vert - 1)^{i-t} \\
& = (\vert S_{N}\vert - 1)^{-\vert g\vert/2} \sum_{i=0}^{+\infty}\eta_{i}\left(\sum_{t=0}^{\min(i, \vert g\vert)}\overline{\eta}_{i+\vert g\vert-2t}\right).
\end{align*}
For $t\in \Z$, let $T_{t}$ be the shift operator on $\ell^{2}(\N)$ sending $\delta_{n}$ to $\delta_{n-t}$ if $t\leqslant n$ and $0$ otherwise. Then,
\begin{equation*}
\left(\sum_{t=0}^{\min(i, \vert g\vert)}\eta_{i+\vert g\vert-2t}\right)_{i\in \N} = \sum_{t=0}^{\vert g\vert}T_{\vert g\vert - 2t}\left((\eta_{i})_{i\in \N}\right)
\end{equation*}
and since the shifts have norm one, this vector has $\ell^{2}$-norm at most $(\vert g\vert+1)\|(\eta_{i})_{i\in \N}\|_{2}\leqslant \vert g\vert + 1$ so that by the Cauchy-Scharz inequality,
\begin{equation*}
\vert\varphi_{\xi}(g)\vert\leqslant (\vert g\vert + 1)(\vert S\vert - 1)^{-\vert g\vert/2} \leqslant e^{(1-\ln(\vert S_{N}\vert - 1))\vert g\vert/2}.
\end{equation*}
It follows that for $N\geqslant 3$, the state $\varphi_{\xi}$ has exponential decay with rate $(\ln(\vert S_{N}\vert - 1) - 1)/2$. But then, 
\begin{equation*}
\frac{\ln(\vert S_{N}\vert - 1)}{2\alpha} = \frac{\ln(2N-1)}{\ln(2N-1) - 1} \to 1
\end{equation*}
so that for $N$ large enough the convergence is exponential from the first step on and there is no cut-off phenomenon.
\end{proof}

The functions in the statement are precisely those which are radial and associated with the regular representation. We cannot exclude that for some representation $\pi$ which is not contained in the regular one, and a vector $\eta$, exists $k$ such that $\varphi_{\pi, \eta}^{k} = \varphi_{\xi}$, in which case $\varphi_{\pi, \eta}$ would exhibit a cut-off phenomenon. In that case our result shows that exponential converge occurs right when the function becomes associated with the regular representation.

\bibliographystyle{amsplain}
\bibliography{../../../quantum}

\end{document}